%% file: main.tex
\documentclass[11pt]{article}
\usepackage[margin=28mm]{geometry}
\usepackage{amsmath,amssymb,amsthm}
\usepackage{booktabs}
\usepackage{array}
\usepackage{needspace}
\usepackage{tikz}
\usepackage[colorlinks=true,linkcolor=blue!60!black,citecolor=blue!60!black,urlcolor=blue!60!black]{hyperref}

\definecolor{pathblue}{RGB}{40,90,200}
\definecolor{pathorange}{RGB}{220,120,30}
\definecolor{tileA}{RGB}{221,232,248}
\definecolor{tileB}{RGB}{250,235,215}
\definecolor{tileC}{RGB}{235,240,232}

\newtheorem{theorem}{Theorem}[section]
\newtheorem{lemma}[theorem]{Lemma}
\newtheorem{corollary}[theorem]{Corollary}

\theoremstyle{remark}
\newtheorem{remark}[theorem]{Remark}
\newtheorem{observation}[theorem]{Observation}
\theoremstyle{definition}

\newcommand{\floor}[1]{\left\lfloor #1 \right\rfloor}
\newcommand{\core}{\mathsf{C}}
\newcommand{\Zt}{\mathsf{Z}}
\newcommand{\Kt}{\mathsf{K}}
\newcommand{\Wt}{\mathsf{W}}
\newcommand{\Xt}{\mathsf{X}}

\title{Farey Structure in Modulo Krinkle Tilings:\\
Mediant Splicing and Generation of Prototiles from a Single Edge}
\author{Mikihiro Fujiwara\\[2pt] \small Independent Researcher, Tokyo, Japan}
\date{\today}

\begin{document}
\maketitle

\begin{abstract}
The Modulo Krinkle tilings of Imura (arXiv:2506.07638) form a family of non-periodic,
spiral monohedral tilings parametrized by a reduced fraction $m/k$ and an integer $t\ge 2$.
We show that the Farey sum (mediant) $(m_1+m_2)/(k_1+k_2)$ of two Farey-adjacent parameters
is realized by an exact geometric operation on prototiles: the lower boundary path of the
$(m_1+m_2,k_1+k_2)$-prototile is obtained by concatenating the parents' lower paths after an
edge-length-preserving progressive rotation (\emph{fan-twist}) of their edges.
Conversely, every prototile admits exactly one fan-twist splice decomposition---no
non-adjacent parameters ever splice---the cut position being $k_1=m^{-1}\bmod k$,
and the recursion descends the Stern--Brocot tree to a single unit edge.
The combinatorial core of the operation is the classical standard factorization of Christoffel
words; the contribution here is its exact edge-isometric realization on circular direction systems
and the resulting structure theory for the Modulo Krinkle family, including the recently
introduced variants: we prove a \emph{separation theorem} stating that every variant prototile
is the common recursively-generated core plus finitely many direction-invariant decoration
edges. As a corollary of Imura's spiral-arm count formula, the two Farey parents are visible
in the offset-free tiling itself: the numbers of counterclockwise and clockwise spiral arms
are $t\,k_1$ and $t\,k_2$.
\end{abstract}

\section{Introduction}\label{sec:intro}

Imura~\cite{Imura2025} introduced a family of non-periodic monohedral tilings of the plane,
the \emph{Modulo Krinkle} tilings, constructed from elementary modular arithmetic:
a reduced fraction $m/k$ and an integer $t\ge 2$ determine a $(2k+2)$-gon prototile whose
rotated copies tile the plane with a characteristic staggered rotational structure.
A follow-up paper~\cite{Imura2026} reformulates the family as spiral tilings---continuing a
line of constructions going back to Voderberg and to Gailiunas'
spirals~\cite{Gailiunas2000,StockWichmann2000,GruenbaumShephard1987}---and introduces three
variants (called $\mathrm{m}$, $\mathrm{p}$, $\mathrm{q}$) whose prototiles split into two
mirror-image parts.

\begin{figure}[t]
\centering
\input{figs/fig-tilings}
\caption{Patches of three Modulo Krinkle tilings for $t=3$: the $(1,3)$- and
$(2,5)$-tilings and the tiling of their mediant $(3,8)$. Each tiling is monohedral: all
tiles are rotated copies of a single $(2k+2)$-gon prototile. This note shows that the
parameter operation $(1,3)\oplus(2,5)=(3,8)$ is realized by an exact geometric operation
on the prototiles (Theorem~\ref{thm:splice}), and that the two parents stay visible in
the child tiling as its $t\,k_1=9$ counterclockwise and $t\,k_2=15$ clockwise spiral
arms, where $k_1=3$ and $k_2=5$ are the parents' denominators
(Section~\ref{sec:arms}).}
\label{fig:tilings}
\end{figure}

This note studies the parameter space of the family through the map
$(m_1,k_1),(m_2,k_2)\mapsto(m_1+m_2,k_1+k_2)$, i.e.\ the \emph{Farey sum} or
\emph{mediant} of the two fractions; Figure~\ref{fig:tilings} shows a first example.
Our results are as follows.

\begin{enumerate}
\item \textbf{Index correspondence (Lemma~\ref{lem:index}).}
If $m_1/k_1<m_2/k_2$ are Farey-adjacent ($m_2k_1-m_1k_2=1$), the direction indices
$s_j = jm\bmod k$ of the child $(m,k)=(m_1+m_2,k_1+k_2)$ are given by closed-form
expressions in the parents' indices; the divisibility contained in these formulas is
exactly equivalent to the use of Farey adjacency.
\item \textbf{Mediant splicing (Theorem~\ref{thm:splice}).}
The child's lower boundary path coincides \emph{vertex by vertex} with the concatenation
of the parents' lower paths after a progressive rotation of their edges
(rotation angle affine in the edge number; all edges keep unit length).
Combinatorially this is the standard factorization of Christoffel
words~\cite{BorelLaubie1993,BLRS2008}; geometrically it is an exact edge-isometric
realization of that factorization on the circular direction system $\{e^{2\pi i d/n}\}$.
\item \textbf{Unique decomposition and generation
(Theorem~\ref{thm:decompose}, Corollary~\ref{cor:generation}).}
Every prototile admits exactly one fan-twist splice decomposition: uniqueness holds not
merely among Farey-adjacent pairs but among \emph{all} cut positions and parameters, the
divisibility in the angle-matching conditions forcing adjacency. The cut position is the
modular inverse $k_1=m^{-1}\bmod k$. The recursion descends the Stern--Brocot tree and
always terminates at a single unit edge, so the whole family is generated from one edge
by iterated fan-twist splicing.
\item \textbf{Separation of generation and decoration (Theorem~\ref{thm:separation}).}
All seven boundary templates of \cite{Imura2026} (original and variants
$\mathrm{m},\mathrm{p},\mathrm{q}$, two parts each) are the \emph{same} recursively
generated core plus finitely many \emph{decoration edges} whose angles
($0$ and $2\pi/t$) are invariant across the parameter change. Consequently the variants
are covered by the same generation theory. The theorem is stated for arbitrary
templates with system-invariant decorations; the admissible decoration alphabet has
exactly $t$ letters, and the family of closed template pairs satisfying our theorems is
infinite---which templates tile the plane is a separate question
(Problem~3 of Section~\ref{sec:open}).
\item \textbf{Spiral arms (Observation~\ref{obs:arms}).}
Combining Theorem~\ref{thm:decompose} with the arm-count formula of \cite{Imura2026},
the numbers of counterclockwise and clockwise spiral arms of the offset-free
$(m,k,t)$-tiling are $t\,k_1$ and $t\,k_2$: the two Farey parents are directly visible
in the tiling.
\end{enumerate}

\begin{figure}[t]
\centering
\input{figs/fig-tree}
\caption{The family as a Stern--Brocot tree ($t=3$): each node carries the prototile of
the fraction $m/k$, and the thin edges join a fraction to its two Farey parents (blue:
left parent, orange: right parent). The two seeds $0/1$ and $1/1$ share the same unit
rhombus. Every prototile is obtained from that rhombus by iterated mediant splicing
along the tree (Corollary~\ref{cor:generation}); the highlighted node $(3,8)$, with
parents $(1,3)$ and $(2,5)$, is the running example of this paper. Nodes of depth $4$
other than $3/8$ are omitted.}
\label{fig:tree}
\end{figure}

Figure~\ref{fig:tree} summarizes the resulting picture: the family of prototiles,
arranged by the value of $m/k$, is the Stern--Brocot tree grown from a single unit
rhombus.

\paragraph{Related work.}
The standard factorization of Christoffel words is classical~\cite{BorelLaubie1993}; see
\cite{BLRS2008} for a systematic treatment, including its geometric meaning on lattice
paths (the factorization point is the lattice point closest to the line segment).
Christoffel words also appear in discrete geometry as digital straight lines
\cite{KletteRosenfeld2004}, where Stern--Brocot descent underlies subsegment and
recognition algorithms~\cite{SaidLachaud2011}, and in the study of polyominoes that tile
the plane by translation~\cite{BlondinMasse2009}.
The prototile-level mediant correspondence itself was observed informally by Imura
before the present work: she noted that the $(5,12)$-prototile resembles a ``hybrid''
of the $(2,5)$- and $(3,7)$-prototiles, matching
$\tfrac{2}{5}\oplus\tfrac{3}{7}=\tfrac{5}{12}$ in the Stern--Brocot
tree~\cite{ImuraPost2025}. The observation appears in neither \cite{Imura2025} nor
\cite{Imura2026} and reached the author only after the present work was completed. What is new here is the precise formulation
and proof of this correspondence: the identification of its combinatorial core with
the Christoffel standard factorization, the lift of that factorization to an
\emph{exact} edge-isometric operation on the circular direction systems---uniformly
in the variant templates---and the resulting uniqueness, generation, and separation
theorems.

\section{Preliminaries}\label{sec:prelim}

Fix an integer $t\ge 2$. For a reduced fraction $0< m/k< 1$ (we also allow the formal
\emph{seeds} $(m,k)=(0,1)$ and $(1,1)$), put
\[
n = t\,k, \qquad
v_n(d) = \bigl(\cos\tfrac{2\pi d}{n},\ \sin\tfrac{2\pi d}{n}\bigr).
\]
Define the \emph{direction indices}
\[
s_j = j\,m \bmod k \qquad (j=0,1,\dots,k-1),
\]
a permutation of $\{0,\dots,k-1\}$. The \emph{lower word} is
$\ell=(s_0,\dots,s_{k-1},k)$ and the \emph{upper word} $u$ is $\ell$ with its first and
last entries exchanged. The \emph{lower path} $L$ starts at the origin $P$ and appends the
unit edges $v_n(\ell_0),\dots,v_n(\ell_k)$; the \emph{upper path} $U$ does the same with
$u$. Both end at the same point $Q$, and the closed curve $L\cup U$ bounds a simple
$(2k+2)$-gon, the \emph{prototile}; rotated copies of it tile the plane
\cite[Thm.~6.11]{Imura2025}. We call the first $k$ edges of $L$ (directions
$s_0,\dots,s_{k-1}$) the \emph{core} and the last edge (direction $k$, angle $2\pi/t$)
the \emph{terminal edge}.

\begin{figure}[t]
\centering
\input{figs/fig-proto}
\caption{The $(2,5)$-prototile ($t=3$): lower path $L$ (blue), upper path $U$ (gray),
terminal edges dashed.}
\label{fig:proto}
\end{figure}

\paragraph{Running example.}
For $(m,k)=(2,5)$ and $t=3$ we get $n=15$ and
\[
s=(0,2,4,1,3), \qquad \ell=(0,2,4,1,3,5), \qquad u=(5,2,4,1,3,0):
\]
the resulting $12$-gon is shown in Figure~\ref{fig:proto}, and its tiling is the middle
patch of Figure~\ref{fig:tilings}. The pair $(1,3),(2,5)$ is Farey-adjacent
($2\cdot3-1\cdot5=1$) with mediant $(3,8)$; this triple recurs as the running example in
Figures~\ref{fig:tilings}, \ref{fig:tree}, \ref{fig:splice} and~\ref{fig:cut}.

The \emph{carry word} of $(m,k)$ is the binary word
\[
c(m,k) = (c_0,\dots,c_{k-1}), \qquad
c_j = \floor{\tfrac{(j+1)m}{k}} - \floor{\tfrac{jm}{k}} \in\{0,1\},
\]
which records whether the increment $s_{j+1}-s_j$ equals $m$ (letter $0$) or $m-k$
(letter $1$). As a lattice-path code, $c(m,k)$ is the lower Christoffel word of slope
$m/(k-m)$ \cite{BLRS2008}. For the running example, $c(2,5)=(0,0,1,0,1)$.

Two reduced fractions $m_1/k_1<m_2/k_2$ are \emph{Farey-adjacent} if
$m_2k_1-m_1k_2=1$; their \emph{mediant} is $(m_1+m_2)/(k_1+k_2)$, which is then
automatically reduced. Throughout, $(m,k)=(m_1+m_2,k_1+k_2)$ and we write
$s^{(1)},s^{(2)},s$ for the direction indices of $(m_1,k_1),(m_2,k_2),(m,k)$, and
$n_1=t k_1$, $n_2=t k_2$, $n=t k$; note the additivity $n_1+n_2=n$.

\section{The index correspondence lemma}\label{sec:lemma}

\begin{lemma}[Index correspondence]\label{lem:index}
Let $m_1/k_1<m_2/k_2$ be Farey-adjacent and $(m,k)=(m_1+m_2,k_1+k_2)$. Then
\begin{align}
s_j &= \frac{k\,s^{(1)}_j + j}{k_1}, &
\floor{\tfrac{jm}{k}} &= \floor{\tfrac{jm_1}{k_1}}
&& (0\le j\le k_1-1), \tag{$\ast$1}\label{eq:first}\\[2pt]
s_{k_1+i} &= 1 + \frac{k\,s^{(2)}_i - i}{k_2}, &
\floor{\tfrac{(k_1+i)m}{k}} &= m_1 + \floor{\tfrac{im_2}{k_2}}
&& (0\le i\le k_2), \tag{$\ast$2}\label{eq:second}
\end{align}
where in \eqref{eq:second} the first formula is asserted for $0\le i\le k_2-1$.
In particular both numerators are divisible as claimed. Moreover
\begin{equation}
k_1 m - m_1 k \;=\; m_2k_1-m_1k_2 \;=\; 1. \tag{$\star$}\label{eq:star}
\end{equation}
\end{lemma}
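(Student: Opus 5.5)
I would start with ($\star$), since everything else reduces to it. Expanding, $k_1m-m_1k=k_1(m_1+m_2)-m_1(k_1+k_2)=m_2k_1-m_1k_2=1$ by Farey adjacency. I will use it in the equivalent forms $k_1 m\equiv 1\pmod k$ and $k_2m = (k-k_1)m \equiv -1+km\equiv -1\pmod k$. The second form can also be read off directly as $m_2k - mk_2 = m_2k_1-m_1k_2=1$.

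For ($\ast$1) I fix $0\le j\le k_1-1$ and write $jm_1 = k_1 q + s^{(1)}_j$ with $q=\floor{jm_1/k_1}$. Multiplying by $k$ and using $km_1 = k_1m-1$ from ($\star$) gives $j(k_1m-1)=k k_1 q + k s^{(1)}_j$. Hence $k_1(jm - kq) = k s^{(1)}_j + j$, so the numerator is divisible by $k_1$. The quotient $r:=jm-kq$ satisfies $r\equiv jm\pmod k$. It remains to show $0\le r\le k-1$; then $r=s_j$ and $\floor{jm/k}=q$, which proves both identities. Since $r=(ks^{(1)}_j+j)/k_1$, we have $r\ge 0$ immediately. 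For the upper bound, $s^{(1)}_j\le k_1-1$ and $j\le k_1-1$ give $ks^{(1)}_j+j\le k(k_1-1)+k_1-1<kk_1$, so $r<k$.

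For ($\ast$2) I write $(k_1+i)m = k_1m + im$ and use $k_1m = m_1k+1$ from ($\star$). The claim then becomes $\floor{(1+im)/k}=\floor{im_2/k_2}$ and $(1+im)\bmod k = 1+(ks^{(2)}_i-i)/k_2$. Set $im_2=k_2p+s^{(2)}_i$ and multiply by $k$. Using $km_2 = mk_2+1$, this gives $im k_2 + i = kk_2p + ks^{(2)}_i$, so $k_2(im-kp)=ks^{(2)}_i-i$, again showing the stated divisibility. Put $r'=1+im-kp$. I need $0\le r'\le k-1$, i.e.\ $0\le (ks^{(2)}_i-i)/k_2\le k-2$.

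The main obstacle is exactly these boundary cases, since the bounds are tight at the ends of the range. For the upper bound, $ks^{(2)}_i-i\le k(k_2-1)<k_2(k-1)$ because $k>k_2$ (as $k_1\ge 1$), which gives $\le k-2$ after integrality. For the lower bound, when $s^{(2)}_i=0$ and $i>0$ the numerator would be negative; but $s^{(2)}_i=0$ forces $k_2\mid i$, so $i\in\{0,k_2\}$. The endpoint $i=k_2$ is excluded from the first formula, which is presumably why the statement restricts it. For $i\ge1$ with $s^{(2)}_i\ge1$, we get $k-i\ge k-k_2>0$. Finally, the floor identity at $i=k_2$ is checked directly: $(k_1+k_2)m/k=m$, and $m_1+m_2=m$ agrees.
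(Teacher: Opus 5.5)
Your proof is correct and is essentially the paper's. Multiplying the parent's Euclidean division by $k$ and substituting $km_1=k_1m-1$ (resp.\ $km_2=k_2m+1$) gives the same identity the paper gets from $m_2=(m_1k_2+1)/k_1$ (resp.\ $m_1=(m_2k_1-1)/k_2$), except that divisibility falls out without a separate congruence check, and your range checks (coprimality for $1\le i\le k_2-1$, $k>k_2$ for the upper bound, direct check at $i=k_2$) match the paper's. One cosmetic point: $r'\ge 0$ only needs $(ks^{(2)}_i-i)/k_2\ge -1$, so at $i=k_2$ the first formula would still give the correct residue $0$; it is excluded only because $s_k$ is undefined, not because the numerator turns negative.
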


\begin{proof}
The key is to use Farey adjacency in the form $m_2=(m_1k_2+1)/k_1$.

\emph{First half.} Fix $0\le j\le k_1-1$ and put $q=\floor{jm_1/k_1}$, so that
$jm_1=qk_1+s^{(1)}_j$. Then
\[
jm_2=\frac{jm_1k_2+j}{k_1}=\frac{(qk_1+s^{(1)}_j)k_2+j}{k_1}
= qk_2+\frac{s^{(1)}_jk_2+j}{k_1},
\qquad\text{hence}\qquad
jm = qk + \frac{k\,s^{(1)}_j+j}{k_1}.
\]
It remains to check that the last fraction is the remainder of $jm$ modulo $k$:
(i) \emph{divisibility}: since $k\equiv k_2\pmod{k_1}$ and
$s^{(1)}_j\equiv jm_1 \pmod{k_1}$,
\[
k\,s^{(1)}_j + j \equiv k_2 s^{(1)}_j + j \equiv j(k_2m_1+1) = j\,k_1m_2 \equiv 0 \pmod{k_1};
\]
(ii) \emph{range}: $0\le (k\,s^{(1)}_j+j)/k_1 \le (k+1)(k_1-1)/k_1 < k$
(using $s^{(1)}_j,\,j\le k_1-1$).
Uniqueness of the remainder gives both formulas in \eqref{eq:first}.

\emph{Second half.} Symmetrically use $m_1=(m_2k_1-1)/k_2$. First,
$k_1m-m_1k = m_2k_1-m_1k_2 = 1$, which is \eqref{eq:star}. With
$q'=\floor{im_2/k_2}$ we get
$im_1=(im_2k_1-i)/k_2=q'k_1+(s^{(2)}_ik_1-i)/k_2$, so that
$im = im_1+im_2 = q'k + (k\,s^{(2)}_i-i)/k_2$; hence, using \eqref{eq:star},
\[
(k_1+i)m = k_1m + im = (m_1+q')\,k + 1 + \frac{k\,s^{(2)}_i-i}{k_2}.
\]
(i) \emph{divisibility}: $k_1s^{(2)}_i - i\equiv i(k_1m_2-1)= i\,m_1k_2\equiv 0\pmod{k_2}$;
(ii) \emph{range}: for $i=0$ the value is $1$; for $i\ge 1$ we have $s^{(2)}_i\ge1$
(as $\gcd(m_2,k_2)=1$), so the numerator is $\ge k-i>0$, and from above
$1+(k(k_2-1)-i)/k_2 \le 1+k-k/k_2 < k$. This proves \eqref{eq:second} for
$i\le k_2-1$; for $i=k_2$ both sides of the floor identity equal $m$.
Farey adjacency is used exactly once in each divisibility step.
Part~(2) of Theorem~\ref{thm:decompose} below turns this observation into a precise
converse: the integrality of the closed forms \emph{forces} adjacency.
\end{proof}

\section{Mediant splicing}\label{sec:splice}

\begin{theorem}[Mediant splicing]\label{thm:splice}
Let $m_1/k_1<m_2/k_2$ be Farey-adjacent, $t\ge2$ common, and $n=t(k_1+k_2)$.
Then the lower path of the $(m_1+m_2,\,k_1+k_2)$-prototile coincides, vertex by vertex,
with the path obtained by concatenating from the origin:
\begin{itemize}
\item the edges $j=0,\dots,k_1-1$ of the lower path of $(m_1,k_1)$, edge $j$ rotated by
$+\,j\cdot 2\pi/(k_1 n)$;
\item the edges $i=0,\dots,k_2-1$ of the lower path of $(m_2,k_2)$, edge $i$ rotated by
$+\,2\pi/n - i\cdot 2\pi/(k_2 n)$;
\item the terminal edge (angle $2\pi/t$, identical in all three tilings).
\end{itemize}
All edges keep unit length; only their directions change, by an angle affine in the
edge number (a \emph{fan-twist}).
\end{theorem}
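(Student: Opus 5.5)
Since all edges have unit length and the lower path is the sum of its edge vectors starting at the origin, vertex-by-vertex agreement is the same as edge-by-edge agreement of directions. It therefore suffices to show that each rotated parent edge has exactly the angle $2\pi s_j/n$ (respectively $2\pi s_{k_1+i}/n$, respectively $2\pi k/n=2\pi/t$) of the corresponding child edge. The whole proof reduces to angle bookkeeping, and Lemma~\ref{lem:index} supplies the closed forms we need.

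\emph{First block.} Edge $j$ ($0\le j\le k_1-1$) of the lower path of $(m_1,k_1)$ has angle $2\pi s^{(1)}_j/n_1 = 2\pi s^{(1)}_j/(tk_1)$. After rotation by $j\cdot 2\pi/(k_1 n)$ its angle is
\[
\frac{2\pi}{n}\Bigl(\frac{k\,s^{(1)}_j}{k_1}+\frac{j}{k_1}\Bigr)=\frac{2\pi}{n}\cdot\frac{k\,s^{(1)}_j+j}{k_1},
\]
using $n/n_1=k/k_1$. By \eqref{eq:first} this equals $2\pi s_j/n$, which is the direction of edge $j$ of the child's lower path.

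\emph{Second block.} Edge $i$ ($0\le i\le k_2-1$) of the lower path of $(m_2,k_2)$ has angle $2\pi s^{(2)}_i/(tk_2)$. After rotation by $2\pi/n - i\cdot2\pi/(k_2n)$ its angle is
\[
\frac{2\pi}{n}\Bigl(1+\frac{k\,s^{(2)}_i-i}{k_2}\Bigr),
\]
which by \eqref{eq:second} equals $2\pi s_{k_1+i}/n$, the direction of child edge $k_1+i$. The concatenation places these edges at positions $k_1,\dots,k-1$, matching the child indexing.

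\emph{Terminal edge.} The child's final entry $\ell_k=k$ gives angle $2\pi k/(tk)=2\pi/t$, which is the terminal angle of each parent. So no rotation is needed and the total count $k_1+k_2+1=k+1$ of edges is correct.

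Summing the matching edge vectors gives coincidence of all vertices. The only substantive input is the exact integrality and range statements of Lemma~\ref{lem:index}. Since these are already proved, the main remaining obstacle is bookkeeping: confirming that the parents' angular units $2\pi/n_1$, $2\pi/n_2$ convert to the child's unit $2\pi/n$ via the factors $k/k_1$ and $k/k_2$, and that the rotation offsets contribute precisely $+j/k_1$ and $1-i/k_2$ in those units. Representing angles modulo $2\pi$ causes no ambiguity, because the resulting indices lie in $[0,k)$ by the lemma.
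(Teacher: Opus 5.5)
Your proposal is correct and follows the paper's proof essentially line by line. You convert each rotated parent angle into units of $2\pi/n$ via $n/n_i=k/k_i$, identify it with $2\pi s_j/n$ or $2\pi s_{k_1+i}/n$ using the closed forms of Lemma~\ref{lem:index}, note that the terminal angle is $2\pi/t$ in all three systems, and conclude vertex-by-vertex agreement from edge-by-edge agreement of unit vectors (your closing remark about working modulo $2\pi$ is unnecessary, since the angle identities hold exactly as real numbers, but it does no harm).
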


\begin{proof}
The angle of the child's core edge $j<k_1$ is, by Lemma~\ref{lem:index} and $k/n=1/t$,
\[
\frac{2\pi s_j}{n}
= \frac{2\pi\,(k\,s^{(1)}_j + j)}{k_1 n}
= \frac{2\pi s^{(1)}_j}{n_1} + j\cdot\frac{2\pi}{k_1 n},
\]
whose first term is precisely the angle of edge $j$ of the first parent. Similarly, for
edge $k_1+i$,
\[
\frac{2\pi s_{k_1+i}}{n}
= \frac{2\pi s^{(2)}_i}{n_2} + \frac{2\pi}{n} - i\cdot\frac{2\pi}{k_2 n}.
\]
The terminal edges have angle $2\pi k/n = 2\pi k_1/n_1 = 2\pi k_2/n_2 = 2\pi/t$.
All edges are unit and agree direction-by-direction, so the two vertex sequences starting
at the origin coincide exactly. Finally, the spliced path \emph{is} the canonical lower
path of the child, so simplicity of the prototile and the tiling property are inherited
from \cite{Imura2025} with no further argument.
\end{proof}

\begin{corollary}[Standard factorization]\label{cor:standard}
For Farey-adjacent parameters, $c(m_1+m_2,\,k_1+k_2) = c(m_1,k_1)\,c(m_2,k_2)$.
Since both factors are Christoffel words and a Christoffel word factors into two
Christoffel words in a unique way, this identity recovers the standard factorization of
\cite{BorelLaubie1993}.
\end{corollary}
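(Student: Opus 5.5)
The plan is to read the carry word off the floor identities in Lemma~\ref{lem:index}, not off the direction indices. By definition $c_j=\floor{(j+1)m/k}-\floor{jm/k}$, so $c(m,k)$ is the sequence of first differences of the floor sequence $F(j)=\floor{jm/k}$ for $j=0,\dots,k$. Lemma~\ref{lem:index} already splits this sequence at $j=k_1$. The first half~\eqref{eq:first} gives $F(j)=\floor{jm_1/k_1}$ for $0\le j\le k_1-1$. The second half~\eqref{eq:second} gives $F(k_1+i)=m_1+\floor{im_2/k_2}$ for $0\le i\le k_2$.

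\textbf{Step 1 (first block).} For $0\le j\le k_1-2$, both floors in $c_j$ are covered by~\eqref{eq:first}, so $c_j$ equals the $j$-th carry of $(m_1,k_1)$. The boundary index $j=k_1-1$ needs separate care. Here $F(k_1)=m_1+\floor{0}=m_1$ by~\eqref{eq:second} with $i=0$, and the parent's value is $\floor{k_1m_1/k_1}=m_1$. Hence $c_{k_1-1}=m_1-\floor{(k_1-1)m_1/k_1}$, which is exactly the last carry letter of $(m_1,k_1)$. So the first $k_1$ letters of $c(m,k)$ are $c(m_1,k_1)$.

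\textbf{Step 2 (second block).} For $0\le i\le k_2-1$, both $F(k_1+i)$ and $F(k_1+i+1)$ come from~\eqref{eq:second}, which is valid up to $i=k_2$. The constant $m_1$ cancels in the difference, leaving $\floor{(i+1)m_2/k_2}-\floor{im_2/k_2}$. This is the $i$-th letter of $c(m_2,k_2)$. Concatenating the two blocks gives the identity $c(m,k)=c(m_1,k_1)\,c(m_2,k_2)$, since the lengths satisfy $k_1+k_2=k$.

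\textbf{Step 3 (identification).} I would invoke the fact recalled in Section~\ref{sec:prelim} that $c(m,k)$ is the lower Christoffel word of slope $m/(k-m)$. Both factors are therefore Christoffel words, of slopes $m_1/(k_1-m_1)$ and $m_2/(k_2-m_2)$. Farey adjacency of $m_1/k_1$ and $m_2/k_2$ transfers to these slopes: the determinant $m_2(k_1-m_1)-m_1(k_2-m_2)=m_2k_1-m_1k_2=1$. This is precisely the condition in the standard factorization theorem of \cite{BorelLaubie1993,BLRS2008}, which says a nontrivial Christoffel word factors uniquely as a product of two Christoffel words. Our factorization must therefore be the standard one.

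The seeds $(0,1)$ and $(1,1)$ give the one-letter words $0$ and $1$, and they fit the same statement. The only delicate point I expect is the boundary index $j=k_1-1$ in Step~1, because Lemma~\ref{lem:index} does not state~\eqref{eq:first} at $j=k_1$. That gap is closed by the $i=0$ case of~\eqref{eq:second}, as above.
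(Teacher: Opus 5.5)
Your proposal is correct and follows the paper's proof. You use differences of \eqref{eq:first} for $j\le k_1-2$, handle the boundary letter $j=k_1-1$ with \eqref{eq:second} at $i=0$, and take differences of \eqref{eq:second} (including $i=k_2$ for the last letter) for the second block, then appeal to uniqueness of the two-Christoffel factorization just as the paper does; your determinant check on the slopes $m_i/(k_i-m_i)$ is correct but not needed, since uniqueness alone identifies the standard factorization.
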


\begin{proof}
Substitute the floor identities of Lemma~\ref{lem:index} into
$c_j=\floor{(j+1)m/k}-\floor{jm/k}$. For $j\le k_1-2$ take the difference of two
instances of \eqref{eq:first}; at the boundary $j=k_1-1$ combine \eqref{eq:second} with
$i=0$ and \eqref{eq:first} with $j=k_1-1$; for $j=k_1+i$ take differences of
\eqref{eq:second} (the last letter uses the case $i=k_2$).
\end{proof}

\begin{remark}[Offset variant]\label{rem:offset}
Imura's \emph{offset} variant replaces $n=tk$ by $n=2(tk-m)$. Lemma~\ref{lem:index} does
not mention $n$ and holds verbatim, and $n$ remains additive:
$2(tk_1-m_1)+2(tk_2-m_2)=2(tk-m)$. However, the \emph{affine-in-index} form of the
fan-twist in Theorem~\ref{thm:splice} is specific to $n\propto k$: with the offset one has
$k n_1 - k_1 n = 2(mk_1-m_1k)=2$, and the rotation amounts acquire correction terms,
\[
\mathrm{rot}_j = 2\pi\Bigl(\frac{j}{k_1 n} + \frac{2\,s^{(1)}_j}{k_1 n n_1}\Bigr),
\qquad
\mathrm{rot}_{k_1+i} = 2\pi\Bigl(\frac{1}{n} - \frac{i}{k_2 n}
 - \frac{2\,s^{(2)}_i}{k_2 n n_2}\Bigr),
\]
while the terminal edge, no longer direction-invariant, rotates by $4\pi/(n n_1)$
relative to the first parent. The general statement---edge rotation
$=2\pi s_j/n - 2\pi s^{(1)}_j/n_1$, always in closed form by
Lemma~\ref{lem:index}---holds for both conventions.

In fact both conventions belong to the affine family $n=\alpha k+\beta m$
($\alpha,\beta\in\mathbb{Z}$ fixed for the family; Imura's two are
$(\alpha,\beta)=(t,0)$ and $(2t,-2)$). For any such convention $n$ remains additive,
Farey adjacency forces the \emph{constant} residues $k\,n_1-k_1 n=-\beta$ and
$k\,n_2-k_2 n=\beta$, and consequently
\[
\mathrm{rot}_j = 2\pi\Bigl(\frac{j}{k_1 n} - \frac{\beta\,s^{(1)}_j}{k_1 n n_1}\Bigr),
\qquad
\mathrm{rot}_{k_1+i} = 2\pi\Bigl(\frac{1}{n} - \frac{i}{k_2 n}
 + \frac{\beta\,s^{(2)}_i}{k_2 n n_2}\Bigr),
\]
with terminal-edge rotation $-2\pi\beta/(n n_1)$: the entire deviation from the
fan-twist is governed by the single integer $\beta$, and the rotation is affine in
the edge number precisely when $\beta=0$. This is a statement about boundary paths
only; for the tileability of conventions beyond Imura's two see
Section~\ref{sec:open}.
\end{remark}

\begin{figure}[t]
\centering
\input{figs/fig-splice}
\caption{Mediant splicing $(1,3)\oplus(2,5)\to(3,8)$ ($t=3$): the first $3$ edges of the
child (blue) are the fan-twisted lower path of $(1,3)$; the next $5$ edges (orange) the
fan-twisted lower path of $(2,5)$; the dashed terminal edge is common.}
\label{fig:splice}
\end{figure}

\section{Unique decomposition and generation}\label{sec:decompose}

Geometrically, decomposing a prototile means cutting its lower path and undoing the
fan-twist of Theorem~\ref{thm:splice}. The following theorem states that this succeeds
at exactly one cut position and for exactly one choice of parameters---adjacency is not
an assumption on the candidate decompositions but a consequence of the geometry---and
identifies the parameters as the Farey parents of $m/k$.

\begin{theorem}[Unique decomposition]\label{thm:decompose}
Let $(m,k)$ be reduced with $k\ge2$ and $n=tk$, and define the \emph{canonical pair}
\[
k_1 = m^{-1} \bmod k, \qquad m_1 = \frac{mk_1-1}{k}, \qquad
(m_2,k_2)=(m-m_1,\ k-k_1).
\]
\begin{enumerate}
\item[\emph{(1)}] \emph{(Farey parents.)} The canonical pair is Farey-adjacent, it is
the unique Farey-adjacent pair with mediant $m/k$, and its fan-twist splice
(Theorem~\ref{thm:splice}) reproduces the prototile of $(m,k)$.
\item[\emph{(2)}] \emph{(Rigidity.)} Conversely, suppose that for some cut position
$1\le k_1'\le k-1$ and some reduced parameters $(m_1',k_1')$, $(m_2',k_2')$ with
$k_2'=k-k_1'$ (seeds allowed), the fan-twist splice of Theorem~\ref{thm:splice}---with
$(k_1,k_2)$ replaced by $(k_1',k_2')$---reproduces the lower path of $(m,k)$. Then
$k_1'=k_1$, and $m_i'=m_i$ whenever $k_i'\ge2$ (a block with $k_i'=1$ is realized by
either seed; see Remark~\ref{rem:seeds}).
\end{enumerate}
In particular, the prototile admits exactly one fan-twist splice decomposition, and a
fan-twist splice of two prototiles of the family reproduces a prototile of the family
\emph{only if} the parameters are Farey-adjacent.
\end{theorem}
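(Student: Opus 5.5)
Part~(1) is mostly bookkeeping on top of Lemma~\ref{lem:index} and Theorem~\ref{thm:splice}, so I will handle it first. Since $\gcd(m,k)=1$ and $k\ge2$, the inverse $k_1=m^{-1}\bmod k$ lies in $\{1,\dots,k-1\}$, so $k_2=k-k_1\ge1$. Because $mk_1\equiv1\pmod k$, $m_1=(mk_1-1)/k$ is an integer with $0\le m_1<k_1$. Then $k_1m-m_1k=1$, and expanding $(m_2,k_2)=(m-m_1,k-k_1)$ gives $m_2k_1-m_1k_2=k_1m-m_1k=1$, which is adjacency. Reducedness of both parents is automatic, since any common divisor of $m_i,k_i$ divides $1$. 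For the converse, any adjacent pair with mediant $m/k$ satisfies $(\star)$, i.e.\ $k_1m\equiv1\pmod k$ with $1\le k_1\le k-1$. This pins down $k_1$, hence $m_1$ and $(m_2,k_2)$. Finally, Theorem~\ref{thm:splice} applied to the canonical pair reproduces the lower path, and the upper path is determined by the lower word, so the whole prototile is reproduced.

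For part~(2), I will translate the hypothesis into exact congruence/integer identities. The edges are unit vectors with directions defined mod $2\pi$, and two unit-edge paths coincide vertex by vertex iff their edges agree one by one. So the hypothesis says that for $0\le j<k_1'$,
\[
\frac{s_j}{n}\equiv \frac{s'^{(1)}_j}{tk_1'}+\frac{j}{k_1'n}\pmod 1 ,
\]
and similarly for the second block. Multiplying by $n=tk$ and using $k/n=1/t$, this becomes $k_1' s_j\equiv k s'^{(1)}_j+j\pmod{k_1'n}$. Both sides are small: $k_1's_j<k_1'k$ and $ks'^{(1)}_j+j<k(k_1'-1)+k_1'\le k_1'k$. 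Hence they are in $[0,k_1'n)$, and the congruence is an equality
\[
k_1's_j=k\,s'^{(1)}_j+j .
\]
Take $j=1$, which is allowed whenever $k_1'\ge2$. Reducing mod $k$ gives $k_1'm\equiv1\pmod k$, so $k_1'=k_1$ by uniqueness of the inverse in $[1,k-1]$. If $k_1'=1$, I use the second block instead. Its equality, from the analogous range argument, is $k_2's_{k_1'+i}=k_2'+k\,s'^{(2)}_i-i$. Taking $i=0$ gives $k_2's_{k_1'}=k_2'$, so $s_{k_1'}=1$, i.e.\ $k_1'm\equiv1$ again.

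Once the cut is fixed, $s'^{(1)}_1=(k_1s_1-1)/k$. The canonical $s^{(1)}_1=m_1$ satisfies the same equation by Lemma~\ref{lem:index}, so $m_1'\equiv s'^{(1)}_1=m_1$; with $m_1'<k_1$ this gives $m_1'=m_1$ when $k_1\ge2$. The same comparison at $i=1$ in the second block gives $m_2'=m_2$ when $k_2\ge2$. Blocks of length one carry only the direction-$0$ edge, so both seeds fit.

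The "only if" clause then follows: a splice of two family members at cut $k_1'$ reproducing a prototile forces the parameters to be the canonical pair, which is adjacent by part~(1).

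The main obstacle is the range check that upgrades the congruence mod $k_1'n$ to an integer equality in the second block. There the term $-i$ and the shift $+2\pi/n$ can push values toward the endpoints. For the extremal cases $s'^{(2)}_i=0$ with $i>0$ (seed-like parameters) and the largest $s'^{(2)}_i$, I would bound explicitly. If needed, I would work only with the $i=0$ edge for the cut and the $i=1$ edge for $m_2'$, where the ranges are easy to verify directly.
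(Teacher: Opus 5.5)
Your proof is correct. Part (1) is the paper's argument, and part (2) follows the paper's overall plan: reduce the splice hypothesis to the integer identities \eqref{eq:rigid}, force the cut by a divisibility condition at a single edge, then read off $m_1'$ and $m_2'$ at $j=1$ and $i=1$.

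The difference is which edge forces the cut. The paper uses the last edge $i=k_2'-1$ of the second block, where $s_{k-1}=k-m$ gives $k_2'm\equiv-1\pmod k$. You use $j=1$, which gives $k_1'm\equiv 1\pmod k$, and fall back on $i=0$ when $k_1'=1$. Your $i=0$ argument actually needs no case split. Since $s'^{(2)}_0=0$ for every parameter, that edge is the parent's angle-$0$ edge rotated by $2\pi/n$, so $s_{k_1'}=1$ for every admissible cut. This says directly that the cut sits just before the child's unique edge of index $1$, i.e.\ $k_1'=m^{-1}\bmod k$.

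You are also more careful than the paper about directions being defined only modulo $2\pi$; the paper writes the angle-matching conditions as real equalities without comment. The range check you flag as the main obstacle is immediate. From $0\le s'^{(2)}_i\le k_2'-1$ and $1\le k_2'-i\le k_2'\le k-1$ you get $1\le k\,s'^{(2)}_i+k_2'-i\le kk_2'-1$. Together with $0\le k_2's_{k_1'+i}\le k_2'(k-1)$, both sides lie in $[0,k_2'n)$, so the congruence is an equality.

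Finally, comparing with the canonical values through Lemma~\ref{lem:index} at $j=1$ and $i=1$ spares you the paper's separate check that $s_{k_1+1}=m+1$ when $k_2\ge 2$.
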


\begin{proof}
(1) $m_1$ is an integer by the definition of the modular inverse, and
\[
m_2k_1-m_1k_2=(m-m_1)k_1-m_1(k-k_1)=k_1m-m_1k=1,
\]
so the pair is Farey-adjacent; $\gcd(m_i,k_i)=1$ follows from the determinant being $1$,
and Theorem~\ref{thm:splice} applied to the pair reproduces the prototile. For
uniqueness among Farey-adjacent pairs: if $(m_1,k_1),(m_2,k_2)$ are Farey-adjacent with
mediant $m/k$, then \eqref{eq:star} gives $k_1m\equiv 1 \pmod k$; since $0<k_1<k$,
necessarily $k_1=m^{-1}\bmod k$, and then $m_1=(mk_1-1)/k$ is forced.

(2) All edges are unit, so the splice reproduces the lower path if and only if it matches
every edge direction. Writing $s'^{(1)},s'^{(2)}$ for the direction indices of
$(m_1',k_1'),(m_2',k_2')$, the angle-matching conditions read
\[
\frac{2\pi s_j}{n}=\frac{2\pi s'^{(1)}_j}{t k_1'}+j\cdot\frac{2\pi}{k_1' n}
\quad(0\le j< k_1'),
\qquad
\frac{2\pi s_{k_1'+i}}{n}=\frac{2\pi s'^{(2)}_i}{t k_2'}+\frac{2\pi}{n}
-i\cdot\frac{2\pi}{k_2' n}
\quad(0\le i< k_2'),
\]
i.e., clearing denominators with $n=tk$,
\begin{equation}
k\,s'^{(1)}_j = k_1'\,s_j - j,
\qquad
k\,s'^{(2)}_i = k_2'\,(s_{k_1'+i}-1) + i.
\tag{$\dagger$}\label{eq:rigid}
\end{equation}
In particular $k$ must divide both right-hand sides.

\emph{The cut position is forced.} Take the last edge of the second block,
$i=k_2'-1$, so that $k_1'+i=k-1$ and $s_{k-1}=k-m$. The right-hand side of the second
equation in \eqref{eq:rigid} is
\[
k_2'(k-m-1)+(k_2'-1) \;=\; k_2'(k-m)-1 \;\equiv\; -(k_2'\,m+1) \pmod k,
\]
so divisibility forces $k_2'\,m\equiv-1\pmod k$. With $1\le k_2'\le k-1$ this gives
$k_2'=k-(m^{-1}\bmod k)$, i.e.\ $k_1'=m^{-1}\bmod k=k_1$.

\emph{The parameters are forced.} If $k_1\ge2$, take $j=1$ in \eqref{eq:rigid}: since
$s_1=m$, we get $s'^{(1)}_1=(k_1m-1)/k=m_1$, and $s'^{(1)}_1=m_1'\bmod k_1'=m_1'$,
whence $m_1'=m_1$. If $k_2\ge2$ then $m\le k-2$ (otherwise $m=k-1$ and $k_2=1$), so
$s_{k_1+1}=m+1$, and $i=1$ in \eqref{eq:rigid} gives, using \eqref{eq:star},
\[
k\,s'^{(2)}_1 = k_2\,m+1 = km-(k_1m-1) = k\,(m-m_1),
\]
whence $m_2'=s'^{(2)}_1=m-m_1=m_2$. Finally, a block with $k_i'=1$ has direction
sequence $(0)$ for both seeds, so \eqref{eq:rigid} imposes no further constraint on that
block (Remark~\ref{rem:seeds}).
\end{proof}

\begin{remark}[Degenerate blocks]\label{rem:seeds}
A block with $k_i'=1$ is a single unit edge of angle $0$ under either seed, $(0,1)$ or
$(1,1)$, so the geometry cannot distinguish the two seeds and
Theorem~\ref{thm:decompose}(2) cannot force $m_i'$. Requiring the pair to be
Farey-adjacent---equivalently $m_1'+m_2'=m$---selects the canonical value
$m_i'=m_i\in\{0,1\}$; this is the convention used in Corollary~\ref{cor:generation}
below.
\end{remark}

\begin{corollary}[Generation from a single edge]\label{cor:generation}
Iterating Theorem~\ref{thm:decompose} descends the Stern--Brocot tree and terminates,
after finitely many steps, at the seeds $(0,1)$ and $(1,1)$, whose lower words are both
$\ell=(0,1)$: geometrically one and the same unit rhombus (degenerate for $t=2$), and,
at the level of cores, a single unit edge of angle $0$. Reading the recursion upward,
every prototile of the family is obtained from one unit edge by iterated fan-twist
splicing. The leaves of the decomposition tree, read from left to right with
$(0,1)\mapsto 0$ and $(1,1)\mapsto 1$, spell the carry word $c(m,k)$.
\end{corollary}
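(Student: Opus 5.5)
The plan is an induction on the denominator $k$ (equivalently, on depth in the Stern--Brocot tree), with Theorem~\ref{thm:decompose} driving each step. Take a reduced $(m,k)$ with $k\ge2$. Part~(1) of Theorem~\ref{thm:decompose} gives the canonical pair $(m_1,k_1),(m_2,k_2)$. It is Farey-adjacent, has mediant $m/k$, and splices back to the prototile of $(m,k)$. Both denominators satisfy $1\le k_i\le k-1$, because $k_1=m^{-1}\bmod k$ lies in $\{1,\dots,k-1\}$. Moreover $\gcd(m_i,k_i)=1$, and $0\le m_1/k_1<m/k<m_2/k_2\le 1$ holds since the mediant lies strictly between its parents. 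So each parent is either an interior reduced fraction of smaller denominator or a seed. I would note that $k_i=1$ forces $m_i\in\{0,1\}$, and the Farey-adjacent convention of Remark~\ref{rem:seeds} picks the value. The canonical pair is exactly the pair of Stern--Brocot parents: this is the uniqueness clause of part~(1), together with the classical fact that a node's two Stern--Brocot parents are Farey-adjacent with that node as mediant. Strong induction on $k$ then shows termination after finitely many steps, since denominators strictly decrease and every branch stops when the denominator reaches $1$.

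Next I would verify the seed claims directly from the definitions. For $(0,1)$ and $(1,1)$ we have $k=1$ and $s_0=0$, so $\ell=(0,1)=u$ reversed, i.e.\ $u=(1,0)$. With $n=t$ the two paths are $v_t(0)+v_t(1)$ and $v_t(1)+v_t(0)$, which bound the parallelogram spanned by unit vectors at angles $0$ and $2\pi/t$. This is a unit rhombus, and it degenerates when $t=2$ because the angle is $\pi$. Its core is the single edge of angle $0$. Reading the recursion upward, Theorem~\ref{thm:splice} rebuilds every lower path from these cores: the terminal edge is common to all levels and is simply re-appended, so core is spliced with core. Hence every prototile comes from one unit edge, since the upper path is determined by the lower word.

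For the carry-word statement I would again induct, this time using Corollary~\ref{cor:standard}: $c(m,k)=c(m_1,k_1)\,c(m_2,k_2)$. By induction each factor is the left-to-right leaf word of its own subtree. The decomposition tree of $(m,k)$ places the first parent's tree on the left and the second's on the right, so its leaf word is the concatenation of the two. For the base case, $c(0,1)=(\floor{0}-0)=(0)$ and $c(1,1)=(\floor{1}-\floor{0})=(1)$, which match $(0,1)\mapsto0$ and $(1,1)\mapsto1$.

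I expect the main obstacle to be the bookkeeping of seeds at $k_i=1$. Theorem~\ref{thm:decompose}(2) deliberately leaves $m_i'$ undetermined there, so Remark~\ref{rem:seeds}'s convention must be invoked explicitly. I would also check that it agrees with the carry-word letter: with $k_1=1$ the canonical $m_1=(m-1)/k$ is $0$ unless $m=k=1$, and with $k_2=1$ we get $m_2=1$ because $m_2k_1-m_1=1$. This confirms that the labels $0$ and $1$ are placed consistently with Corollary~\ref{cor:standard}.
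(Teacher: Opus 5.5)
Your proposal is correct and follows the same route as the paper's proof: termination from $1\le k_1\le k-1$ (the denominators strictly decrease), generation by reading Theorem~\ref{thm:splice} upward, and the leaf word from Corollary~\ref{cor:standard} applied along the tree with base cases $c(0,1)=0$, $c(1,1)=1$. Your extra checks (the seed rhombus, the identification with the Stern--Brocot parents, the consistency of the seed labels) fill in details the paper leaves implicit; note only that the outer bounds $0\le m_1/k_1$ and $m_2/k_2\le 1$ do not follow from the mediant lying between its parents, but follow directly from $m_1=(mk_1-1)/k\ge 0$ and $m_2=(mk_2+1)/k\le k_2$.
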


\begin{proof}
For $k\ge2$ the cut satisfies $1\le k_1\le k-1$, so both denominators strictly decrease;
the recursion reaches $k=1$ in finitely many steps. Reversing it, each step is a splice
(Theorem~\ref{thm:splice}). The statement about leaves follows by applying
Corollary~\ref{cor:standard} along the tree, with base cases $c(0,1)=0$, $c(1,1)=1$.
\end{proof}

\begin{figure}[t]
\centering
\input{figs/fig-cut}
\caption{Decomposition of $(3,8)$: the cut position is $k_1=3^{-1}\bmod 8=3$.}
\label{fig:cut}
\end{figure}

\section{Variants and the separation of generation and decoration}\label{sec:variants}

The Bridges paper \cite{Imura2026} introduces variants $\mathrm{m},\mathrm{p},\mathrm{q}$
of the prototile, each consisting of two mirror-image parts. Table~\ref{tab:templates}
reproduces the boundary words of \cite[Table~1]{Imura2026} in the following token
notation: $\core_j$ denotes the core letter of index $j$ (direction $s_j$), and the two
\emph{decoration tokens} are $\Zt$ (direction $0$, angle $0$) and $\Kt$ (direction $k$,
angle $2\pi k/n = 2\pi/t$). Figure~\ref{fig:sep} shows the common core and three
decorated lower paths for the running example $(3,8)$.

\begin{table}[ht]
\centering
\small
\begin{tabular}{lll}
\toprule
template & lower word $\ell$ & upper word $u$\\
\midrule
original & $\core_0\cdots \core_{k-1}\,\Kt$ & $\Kt\,\core_1\cdots \core_{k-1}\,\Zt$\\
$\mathrm{m}$, part 1 & $\core_0\cdots \core_{k-1}$ & $\core_1\cdots \core_{k-1}\,\Zt$\\
$\mathrm{m}$, part 2 & $\core_1\cdots \core_{k-1}\,\Kt$ & $\Kt\,\core_1\cdots \core_{k-1}$\\
$\mathrm{p}$, part 1 & $\core_0\cdots \core_{k-1}\,\Kt$ & $\Kt\,\Zt\,\core_1\cdots \core_{k-1}$\\
$\mathrm{p}$, part 2 & $\core_0\cdots \core_{k-1}\,\Kt$ & $\core_1\cdots \core_{k-1}\,\Kt\,\Zt$\\
$\mathrm{q}$, part 1 & $\core_1\cdots \core_{k-1}\,\Zt\,\Kt$ & $\Kt\,\core_1\cdots \core_{k-1}\,\Zt$\\
$\mathrm{q}$, part 2 & $\Zt\,\Kt\,\core_1\cdots \core_{k-1}$ & $\Kt\,\core_1\cdots \core_{k-1}\,\Zt$\\
\bottomrule
\end{tabular}
\caption{The seven boundary templates (after \cite[Table~1]{Imura2026}).
Every row has the shape (decorations)$^*$ (contiguous core run) (decorations)$^*$,
the core run being $\core_0\cdots\core_{k-1}$ or $\core_1\cdots\core_{k-1}$.}
\label{tab:templates}
\end{table}

\Needspace*{4\baselineskip}
\begin{theorem}[Separation of generation and decoration]\label{thm:separation}
\leavevmode
\begin{enumerate}
\item[\emph{(1)}] \emph{Core generation.} For every reduced $(m,k)$, the core edge
sequence (angles $2\pi s_j/n$, $j=0,\dots,k-1$) is reconstructed exactly from the leaves
of the decomposition tree---single unit edges of angle $0$---by iterated fan-twist
splicing (the core part of Theorem~\ref{thm:splice}).
\item[\emph{(2)}] \emph{Decoration attachment.} Define a \emph{template} to be a
boundary word of the shape
\[
(\text{decorations})^*\;\;
(\text{contiguous core run }\core_0\cdots\core_{k-1}\text{ or }\core_1\cdots\core_{k-1})
\;\;(\text{decorations})^*,
\]
where a
\emph{decoration token} is an edge whose direction is a multiple of $k$---angle
$2\pi c/t$ with $c\in\{0,\dots,t-1\}$, invariant across parents and child---and call a
pair $(\ell,u)$ of templates \emph{closed} if, counting $\core_0$ as the letter $\Zt$
(both have angle $0$), the decoration vector sums of $\ell$ and $u$ agree. For every
closed template pair, mediant splicing lifts: core edges rotate by the fan-twist
amounts of Theorem~\ref{thm:splice}, decoration edges are unchanged, and the spliced
part is the child part of the same template pair. Every row of
Table~\ref{tab:templates} is a closed pair with decorations $\Zt,\Kt$; there $\ell$
and $u$ even use one multiset of directions.
\end{enumerate}
Consequently every prototile part of every variant is obtained as ``one unit edge
$\to$ (iterated splicing) $\to$ core $\to$ (finitely many decoration edges)'', and
mediant splicing lifts to every closed template pair, in particular to all seven
templates of Table~\ref{tab:templates}. Corollary~\ref{cor:generation} is the
special case of the original template (a single decoration $\Kt$).
\end{theorem}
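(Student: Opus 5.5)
Part (1) follows by induction on the denominator $k$, using Theorem~\ref{thm:decompose} and Theorem~\ref{thm:splice}. The base case is $k=1$: the core is the single letter $\core_0$ with angle $2\pi\cdot 0/n=0$, which is a unit edge of angle $0$ for either seed. For $k\ge2$, Theorem~\ref{thm:decompose}(1) supplies the canonical Farey-adjacent pair $(m_1,k_1),(m_2,k_2)$ with $k_1,k_2<k$. By the induction hypothesis their cores (angles $2\pi s^{(i)}_j/n_i$) are obtained from unit edges by iterated splicing. The first two bullets of Theorem~\ref{thm:splice}, i.e.\ the angle identities derived from Lemma~\ref{lem:index}, show that fan-twisting these two cores and concatenating them yields exactly the angles $2\pi s_j/n$ for $0\le j\le k-1$.

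The terminal edge is not needed here, because the core identities of Lemma~\ref{lem:index} are stated edge by edge. Termination is exactly the argument of Corollary~\ref{cor:generation}.

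For part (2), fix a closed template pair and a Farey-adjacent pair of parents. Then analyse each kind of token under splicing.
\begin{itemize}
\item A decoration token of direction $ck$ has angle $2\pi ck/(tk)=2\pi c/t$, which is independent of $k$. The same token therefore has identical angle in both parents and in the child, so it is left unchanged.
\item The core run $\core_0\cdots\core_{k-1}$ is handled by part (1), i.e.\ by Theorem~\ref{thm:splice}.
\item For the run $\core_1\cdots\core_{k-1}$, first note that $s_0=0$ in every system. Hence $\core_0$ contributes angle $0$ with fan-twist rotation $0\cdot 2\pi/(k_1n)=0$. Dropping it leaves the remaining core letters rotated by the same fan-twist amounts, with the first parent's block now starting at $j=1$.
\end{itemize}
The spliced word is therefore the child's word of the same template, letter by letter. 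Since all edges are unit, the vertex sequences from the origin coincide, for $\ell$ and $u$ separately.

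The remaining task is to explain why closedness is the right hypothesis. Every core run has the same vector sum for $\ell$ and $u$ once $\core_0$ is reassigned to $\Zt$: what remains is $\core_1\cdots\core_{k-1}$ in both words. Consequently $\ell$ and $u$ end at the same point if and only if their decoration sums agree. The decoration angles are system-invariant, so closedness holds for the parents precisely when it holds for the child, and the spliced paths close up into the child part.

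Finally, I check the seven rows of Table~\ref{tab:templates} by inspection. In each row the token multisets of $\ell$ and $u$ coincide after counting $\core_0$ as $\Zt$. For example, in the original template, $\ell$ contains $\core_0,\Kt$ while $u$ contains $\Kt,\Zt$.

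The main obstacle is the $\core_1\cdots\core_{k-1}$ runs. When such a run begins a block, the first edge of the second parent's block is not removed, but the first edge of the first parent's block is. I must make sure the splice is still read with the cut at position $k_1$, counted from the absent $\core_0$, and that the case $k_1=1$ is handled: the first parent's block is then empty, and the child's core run consists of the second block alone. I would treat this by defining the truncated splice as the full splice with its initial angle-$0$ edge deleted, and then appeal directly to Theorem~\ref{thm:splice}. The consequence sentence and the special case of Corollary~\ref{cor:generation} then follow by combining (1) and (2) with the single decoration $\Kt$.
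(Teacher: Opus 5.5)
Your proposal is correct and follows essentially the same route as the paper. Part (1) uses finite Stern--Brocot descent (your induction on $k$) combined with the core angle identities of Theorem~\ref{thm:splice}. Part (2) uses system-invariance of the decoration angles $2\pi c/t$, together with the observation that once $\core_0$ is counted as $\Zt$ the residual runs $\core_1\cdots\core_{k-1}$ of $\ell$ and $u$ cancel. Your explicit reading of the truncated run $\core_1\cdots\core_{k-1}$ as the full splice with its initial angle-$0$ edge deleted (so the second parent's block keeps its first edge, including when $k_1=1$) makes precise a point the paper's proof leaves implicit.
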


\begin{proof}
(1) is Theorem~\ref{thm:decompose} (finite descent) combined with the core part of
Theorem~\ref{thm:splice} applied upward from the leaves; a leaf has $k=1$, core word
$(s_0)=(0)$ and angle $2\pi\cdot 0/t=0$.
(2) Compare the spliced part with the child part edge by edge. Core edges agree by
part (1) together with the closed forms of Theorem~\ref{thm:splice}. A decoration edge
of angle $2\pi c/t$ represents the same vector in the systems of $(m_1,k_1)$,
$(m_2,k_2)$ and $(m,k)$, since $2\pi\,ck_1/n_1=2\pi c/t=2\pi\,ck/n$; inserted at the
same template position, it is unchanged by the splice. Finally each part closes: the
difference of the endpoints of $\ell$ and $u$ equals the difference of their decoration
vector sums (counting $\core_0$ as $\Zt$, the remaining core runs
$\core_1\cdots\core_{k-1}$ coincide), which vanishes by hypothesis---in every system,
the decoration vectors being system-invariant. For the rows of
Table~\ref{tab:templates} the hypothesis holds because $\ell$ and $u$ are permutations
of one multiset of directions.
\end{proof}

\begin{figure}[t]
\centering
\input{figs/fig-sep}
\caption{Separation of generation and decoration for $(3,8)$, $t=3$: the common core
(left) and three decorated lower paths; decorations dashed.}
\label{fig:sep}
\end{figure}

\begin{remark}[The decoration alphabet has $t$ letters]\label{rem:alphabet}
An edge direction $d$ (angle $2\pi d/n$ in the system of $(m,k)$) has system-invariant
angle iff $d$ is a multiple of $k$, i.e.\ iff the angle is $2\pi c/t$ for some
$c\in\{0,\dots,t-1\}$. Thus the decoration alphabet admitted by
Theorem~\ref{thm:separation}(2) has exactly $t$ letters, of which $\Zt$ ($c=0$) and
$\Kt$ ($c=1$) are the first two; we write $\Wt$ ($c=2$) and $\Xt$ ($c=3$) for the next
two. The closedness hypothesis constrains only the decoration \emph{vector sums} of
$\ell$ and $u$---strictly weaker than multiset equality (e.g.\ for $t=3$ the three
letters sum to zero)---and neither the number of decorations, nor their placement at
the two ends, nor their (fixed) lengths. Hence the family of closed template pairs
satisfying Theorems~\ref{thm:splice} and~\ref{thm:separation} is infinite. Tileability,
however, is not implied by this formal structure: it imposes the additional constraints
of simplicity and edge-matching between neighboring tiles, and which templates tile is
the subject of Problem~3 in Section~\ref{sec:open}. The gap is genuine: moving
\emph{both} decorations to opposite ends,
$\ell=\core_1\cdots\core_{k-1}\,\Zt\,\Kt$ and $u=\Kt\,\Zt\,\core_1\cdots\core_{k-1}$,
gives a multiset-equal (hence closed) template whose signed area is the \emph{constant}
$\sin(2\pi/t)$ for every $(m,k)$ (the core--decoration cross terms in the shoelace
formula cancel by the symmetry $s_j\leftrightarrow k-s_j$), whereas the original
template has area $\sin(2\pi/t)+2\sum_{j=1}^{k-1}\sin(2\pi j/n)$. For $t=2$ the
constant is $\sin\pi=0$: the template degenerates to zero area and is not a tile at
all, which proves the gap. For $t\ge3$ the failure persists in every case we
computed: for all reduced $(m,k)$ with $k\le40$ and all $t\in\{3,4,5\}$, the two
paths interlace and the boundary fails to be simple whenever $k\ge3$
(Section~\ref{sec:verification}); only $k=2$ yields a tile (a centrally symmetric
hexagon, tiling periodically).
\end{remark}

\section{Spiral arms and the visibility of the Farey parents}\label{sec:arms}

For the offset-free tiling, \cite{Imura2026} shows that the numbers of counterclockwise
and clockwise spiral arms are $t\cdot(m^{-1}\bmod k)$ and $t\cdot(k-(m^{-1}\bmod k))$.

\begin{observation}\label{obs:arms}
By Theorem~\ref{thm:decompose} these counts are $t\,k_1$ and $t\,k_2$: the denominators
of the two Farey parents of $m/k$, scaled by $t$, are visible in the tiling as its two
families of spiral arms.
\end{observation}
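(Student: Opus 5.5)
The observation is a direct substitution of Theorem~\ref{thm:decompose}(1) into Imura's arm-count formula, so the plan is simply to identify the two quantities appearing in that formula with the canonical pair. We take as given, from \cite{Imura2026}, that the offset-free $(m,k,t)$-tiling has $t\cdot(m^{-1}\bmod k)$ counterclockwise and $t\cdot(k-(m^{-1}\bmod k))$ clockwise spiral arms. By the definition of the canonical pair in Theorem~\ref{thm:decompose}, $k_1=m^{-1}\bmod k$ and $k_2=k-k_1$. Substituting gives the counts $t\,k_1$ and $t\,k_2$ at once.

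The remaining step is to justify calling $k_1,k_2$ the denominators of \emph{the} Farey parents of $m/k$. Here we invoke Theorem~\ref{thm:decompose}(1). The canonical pair $(m_1,k_1),(m_2,k_2)$ is Farey-adjacent with mediant $m/k$, and it is the unique such pair. Hence $k_1/k_2$ are the denominators of the left/right Stern--Brocot parents, which by Theorem~\ref{thm:decompose}(2) are also the only splice decomposition of the prototile. For the orientation (left parent $\leftrightarrow$ counterclockwise), note that $m_1/k_1<m_2/k_2$ follows from $m_2k_1-m_1k_2=1>0$. The left parent is therefore the one whose denominator is the modular inverse, matching the counterclockwise count.

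Two boundary cases deserve a short check. For $k\ge2$ we have $1\le k_1\le k-1$, so both counts are positive. When a parent is a seed (for example $m=1$, giving $k_1=1$, or $m=k-1$, giving $k_2=1$), the formula still reads $t\cdot 1$, consistent with Remark~\ref{rem:seeds}. There is no real obstacle. The only point needing care is that the arm-count formula of \cite{Imura2026} is stated for the offset-free convention $n=tk$, which is exactly the setting of Theorem~\ref{thm:decompose}, so no adaptation to the offset variant of Remark~\ref{rem:offset} is needed.
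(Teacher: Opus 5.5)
Your proposal is correct and matches the paper's route: you substitute $k_1=m^{-1}\bmod k$ and $k_2=k-k_1$ from Theorem~\ref{thm:decompose}(1) into Imura's arm-count formula, and you use the uniqueness of the Farey-adjacent pair with mediant $m/k$ to justify calling these ``the'' Farey parents. Your additional checks are sound and go slightly beyond the paper's one-line argument: the orientation via $m_2k_1-m_1k_2=1$, positivity of both counts for $k\ge2$, and the seed cases $m=1$ and $m=k-1$.
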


We take this as evidence that the Farey structure established here at the prototile level
persists at the level of the full tiling; see Section~\ref{sec:open}.

\section{Computational experiments}\label{sec:verification}

Two claims in this paper rest on computation rather than proof. The scripts
(Node.js) are provided as ancillary files; the counts below are reproduced by their
output. (A third computational finding, the tiling templates announced in Problem~3
of Section~\ref{sec:open}, will be documented in a companion paper.)

\paragraph{Rigidity of the direction-system size.}
For every reduced $(m,k)$ with $k\le5$ and every $k+2\le n\le 40$ ($315$
configurations), we ran the wedge construction of \cite{Imura2025} with three
closure rules---the full spiral of $n$ wedges, $g$-fold rotation about the origin,
and the point reflection of the offset case---and tested the result for overlaps
and gaps. A tiling is produced exactly when $n=tk$ or $n=2(tk-m)$ for some integer
$t\ge2$ ($121$ tilings, no exceptions). This is the evidence for the conjecture in
Problem~4 of Section~\ref{sec:open}.

\paragraph{Non-simplicity of the constant-area template.}
For every reduced $(m,k)$ with $k\le40$ and every $t\in\{3,4,5\}$ ($1467$ cases),
the boundary of the constant-area template of Remark~\ref{rem:alphabet} is simple
only for $k=2$: for $k\ge3$ the two paths always intersect. We have not proved this
for all $(k,t)$; the counterexample role of that template rests on the proven case
$t=2$ alone.

\section{Open problems}\label{sec:open}

\begin{enumerate}
\item \textbf{Tiling-level composition.} Our operation is established at the level of
prototile shapes. Is there a direct composition of the two parent \emph{tilings} into the
child tiling? Imura's front-tracking construction \cite{Imura2025} and
Observation~\ref{obs:arms} suggest that the child's front should decompose into
interleaved copies of the parents' fronts.
\item \textbf{Variant tilings.} The placement rule of the two mirror parts inside a
wedge is given in \cite{Imura2026} pictorially; it is implemented in Imura's public
interactive generator\footnote{\url{https://mk.tiling.jp/playground}}. In work
subsequent to this paper we extracted and formalized the rule---each variant wedge
is a union of two triangular cones, one per part, cut out of translation lattices
of the single parts, positioned by a closed-form wedge origin---and this will be
documented in the companion paper of Problem~3. Whether the formalization extends
Theorem~\ref{thm:separation} from prototiles to variant \emph{tilings}, in the
sense of Problem~1, remains open.
\item \textbf{Exotic decorations.} Which of the infinitely many closed template pairs
of Theorem~\ref{thm:separation}(2) tile the plane? In computational experiments
subsequent to this paper we found mirror pairs of templates using the third letter
$\Wt$ (so $t\ge3$) that do tile---periodically, on sheared lattices, and
non-periodically, filling the wedge structure of \cite{Imura2026}; details will appear
in a companion paper. A classification of the tiling templates, and rigorous tiling
proofs, remain open.
\item \textbf{Rigidity of the direction-system size.} Remark~\ref{rem:offset} extends
the splice, with closed-form corrections, to every affine convention
$n=\alpha k+\beta m$. Tileability appears to be far more rigid: the scan reported in
Section~\ref{sec:verification} finds that the wedge construction of \cite{Imura2025}
produces a tiling exactly when
$n=tk$ or $n=2(tk-m)$ for some integer $t\ge2$. We conjecture that the only
conventions tiling uniformly in $(m,k)$ are Imura's two, $(\alpha,\beta)=(t,0)$ and
$(2t,-2)$---in particular $\beta\in\{0,-2\}$ is necessary but not sufficient: the
scan covers, e.g., $n=3k-2m$, which has $\beta=-2$ with odd $\alpha$ and fails. (A
convention with other $(\alpha,\beta)$ can still tile at sporadic parameters where
$\alpha k+\beta m$ happens to coincide with an Imura value.) Thus the formal
Farey-splice structure of Remark~\ref{rem:offset} exists for every $(\alpha,\beta)$,
but only two conventions reach the plane. Whether the $(m,k,n)$-prototile for other
$n$ can tile by some arrangement unrelated to the wedge construction is open as
well.
\end{enumerate}

\paragraph{Acknowledgements.}
The author is deeply grateful to Miki Imura for her inspiring tech talks and
articles shared within the internal engineering community, which motivated this
follow-up study and laid its groundwork. The author thanks Gábor Damásdi for
valuable comments on an earlier version of this manuscript. The author acknowledges the use of
LLM-based applications for assistance in editing and refining the presentation
of the manuscript; all ideas and results presented in this paper are solely
those of the author.

\bibliographystyle{plain}
\bibliography{refs}

\end{document}

%% file: figs/fig-tree.tex
\begin{tikzpicture}[scale=0.8,line join=round]
\draw[pathblue!50,line width=0.5pt] (6.5,-2.05) -- (0,0);
\draw[pathorange!65,line width=0.5pt] (6.5,-2.05) -- (13,0);
\draw[pathblue!50,line width=0.5pt] (4.3333,-4.1) -- (0,0);
\draw[pathorange!65,line width=0.5pt] (4.3333,-4.1) -- (6.5,-2.05);
\draw[pathblue!50,line width=0.5pt] (8.6667,-4.1) -- (6.5,-2.05);
\draw[pathorange!65,line width=0.5pt] (8.6667,-4.1) -- (13,0);
\draw[pathblue!50,line width=0.5pt] (3.25,-6.15) -- (0,0);
\draw[pathorange!65,line width=0.5pt] (3.25,-6.15) -- (4.3333,-4.1);
\draw[pathblue!50,line width=0.5pt] (5.2,-6.15) -- (4.3333,-4.1);
\draw[pathorange!65,line width=0.5pt] (5.2,-6.15) -- (6.5,-2.05);
\draw[pathblue!50,line width=0.5pt] (7.8,-6.15) -- (6.5,-2.05);
\draw[pathorange!65,line width=0.5pt] (7.8,-6.15) -- (8.6667,-4.1);
\draw[pathblue!50,line width=0.5pt] (9.75,-6.15) -- (8.6667,-4.1);
\draw[pathorange!65,line width=0.5pt] (9.75,-6.15) -- (13,0);
\draw[pathblue!50,line width=0.5pt] (4.875,-8.2) to[bend left=14] (4.3333,-4.1);
\draw[pathorange!65,line width=0.5pt] (4.875,-8.2) -- (5.2,-6.15);
\fill[fill=black!6,draw=black!60,line width=0.5pt] (-0.2167,-0.3753)--(0.65,-0.3753)--(0.2167,0.3753)--(-0.65,0.3753)--cycle;
\node[below=0.5pt] at (0,-0.3753) {\scriptsize $\tfrac{0}{1}$};
\fill[fill=black!6,draw=black!60,line width=0.5pt] (12.7833,-0.3753)--(13.65,-0.3753)--(13.2167,0.3753)--(12.35,0.3753)--cycle;
\node[below=0.5pt] at (13,-0.3753) {\scriptsize $\tfrac{1}{1}$};
\fill[fill=black!6,draw=black!60,line width=0.5pt] (6.175,-2.6129)--(6.825,-2.6129)--(7.15,-2.05)--(6.825,-1.4871)--(6.175,-1.4871)--(5.85,-2.05)--cycle;
\node[below=0.5pt] at (6.5,-2.6129) {\scriptsize $\tfrac{1}{2}$};
\fill[fill=black!6,draw=black!60,line width=0.5pt] (3.9581,-4.75)--(4.4794,-4.75)--(4.8787,-4.4149)--(4.9693,-3.9015)--(4.7086,-3.45)--(4.1873,-3.45)--(4.0968,-3.9634)--(3.6974,-4.2985)--cycle;
\node[below=0.5pt] at (4.3333,-4.75) {\scriptsize $\tfrac{1}{3}$};
\fill[fill=black!6,draw=black!60,line width=0.5pt] (8.2914,-4.75)--(8.8127,-4.75)--(8.9032,-4.2366)--(9.3026,-3.9015)--(9.0419,-3.45)--(8.5206,-3.45)--(8.1213,-3.7851)--(8.0307,-4.2985)--cycle;
\node[below=0.5pt] at (8.6667,-4.75) {\scriptsize $\tfrac{2}{3}$};
\fill[fill=black!6,draw=black!60,line width=0.5pt] (2.8747,-6.8)--(3.2769,-6.8)--(3.6253,-6.5989)--(3.8264,-6.2506)--(3.8264,-5.8483)--(3.6253,-5.5)--(3.2231,-5.5)--(3.2231,-5.9022)--(3.0219,-6.2506)--(2.6736,-6.4517)--cycle;
\node[below=0.5pt] at (3.25,-6.8) {\scriptsize $\tfrac{1}{4}$};
\fill[fill=black!6,draw=black!60,line width=0.5pt] (4.8247,-6.8)--(5.1529,-6.8)--(5.3725,-6.5561)--(5.3382,-6.2298)--(5.638,-6.0963)--(5.7394,-5.7842)--(5.5753,-5.5)--(5.2471,-5.5)--(5.1457,-5.8121)--(4.8459,-5.9456)--(4.8802,-6.2719)--(4.6606,-6.5158)--cycle;
\node[below=0.5pt] at (5.2,-6.8) {\scriptsize $\tfrac{2}{5}$};
\fill[fill=black!6,draw=black!60,line width=0.5pt] (7.4247,-6.8)--(7.7529,-6.8)--(7.8543,-6.4879)--(8.1541,-6.3544)--(8.1198,-6.0281)--(8.3394,-5.7842)--(8.1753,-5.5)--(7.8471,-5.5)--(7.6275,-5.7439)--(7.6618,-6.0702)--(7.362,-6.2037)--(7.2606,-6.5158)--cycle;
\node[below=0.5pt] at (7.8,-6.8) {\scriptsize $\tfrac{3}{5}$};
\fill[fill=black!6,draw=black!60,line width=0.5pt] (9.3747,-6.8)--(9.7769,-6.8)--(9.7769,-6.3978)--(9.9781,-6.0494)--(10.3264,-5.8483)--(10.1253,-5.5)--(9.7231,-5.5)--(9.3747,-5.7011)--(9.1736,-6.0494)--(9.1736,-6.4517)--cycle;
\node[below=0.5pt] at (9.75,-6.8) {\scriptsize $\tfrac{3}{4}$};
\fill[fill=pathblue!12,draw=pathblue,line width=0.7pt] (4.4997,-8.85)--(4.7118,-8.85)--(4.8618,-8.7)--(4.8618,-8.488)--(5.0666,-8.4331)--(5.1727,-8.2494)--(5.1178,-8.0446)--(5.3014,-7.9385)--(5.3563,-7.7337)--(5.2503,-7.55)--(5.0382,-7.55)--(4.9833,-7.7549)--(4.7996,-7.8609)--(4.8545,-8.0657)--(4.7485,-8.2494)--(4.5436,-8.3043)--(4.5436,-8.5164)--(4.3937,-8.6663)--cycle;
\node[below=0.5pt] at (4.875,-8.85) {\scriptsize $\tfrac{3}{8}$};
\end{tikzpicture}

%% file: figs/fig-proto.tex
\begin{tikzpicture}[scale=1.05,line cap=round,line join=round]
\draw[fill=black!6,draw=none] (0,0) -- (1,0) -- (1.6691,0.7431) -- (1.5646,1.7376) -- (2.4781,2.1443) -- (2.7871,3.0954) -- (2.2871,3.9614) -- (1.2871,3.9614) -- (0.9781,3.0103) -- (0.0646,2.6036) -- (0.1691,1.6091) -- (-0.5,0.866) -- cycle;
\draw[black!45,thick,densely dashed] (0,0) -- (-0.5,0.866);
\draw[black!45,thick] (-0.5,0.866) -- (0.1691,1.6091);
\draw[black!45,thick] (0.1691,1.6091) -- (0.0646,2.6036);
\draw[black!45,thick] (0.0646,2.6036) -- (0.9781,3.0103);
\draw[black!45,thick] (0.9781,3.0103) -- (1.2871,3.9614);
\draw[black!45,thick] (1.2871,3.9614) -- (2.2871,3.9614);
\draw[pathblue,very thick] (0,0) -- (1,0);
\draw[pathblue,very thick] (1,0) -- (1.6691,0.7431);
\draw[pathblue,very thick] (1.6691,0.7431) -- (1.5646,1.7376);
\draw[pathblue,very thick] (1.5646,1.7376) -- (2.4781,2.1443);
\draw[pathblue,very thick] (2.4781,2.1443) -- (2.7871,3.0954);
\draw[black!55,very thick,densely dashed] (2.7871,3.0954) -- (2.2871,3.9614);
\fill (0,0) circle (1.6pt) node[below left=-1pt] {\scriptsize $P$};
\fill (2.2871,3.9614) circle (1.6pt) node[above right=-1pt] {\scriptsize $Q$};
\node[pathblue,below right=2pt] at (1.6691,0.7431) {\scriptsize $L$};
\node[black!55,above left=1pt] at (0.1691,1.6091) {\scriptsize $U$};
\end{tikzpicture}

%% file: figs/fig-splice.tex
\begin{tikzpicture}[scale=0.62,line cap=round,line join=round]
\draw[pathblue,very thick] (0,0) -- (1,0);
\draw[pathblue,very thick] (1,0) -- (1.766,0.6428);
\draw[pathblue,very thick] (1.766,0.6428) -- (1.9396,1.6276);
\draw[black!55,very thick,densely dashed] (1.9396,1.6276) -- (1.4396,2.4936);
\fill (0,0) circle (2pt);
\node at (0.9698,-1.1) {\small $(1,3)$};
\draw[pathorange,very thick] (4.0396,0) -- (5.0396,0);
\draw[pathorange,very thick] (5.0396,0) -- (5.7087,0.7431);
\draw[pathorange,very thick] (5.7087,0.7431) -- (5.6042,1.7376);
\draw[pathorange,very thick] (5.6042,1.7376) -- (6.5177,2.1443);
\draw[pathorange,very thick] (6.5177,2.1443) -- (6.8267,3.0954);
\draw[black!55,very thick,densely dashed] (6.8267,3.0954) -- (6.3267,3.9614);
\fill (4.0396,0) circle (2pt);
\node at (5.4332,-1.1) {\small $(2,5)$};
\draw[pathblue,very thick] (8.9267,0) -- (9.9267,0);
\draw[pathblue,very thick] (9.9267,0) -- (10.6338,0.7071);
\draw[pathblue,very thick] (10.6338,0.7071) -- (10.6338,1.7071);
\draw[pathorange,very thick] (10.6338,1.7071) -- (11.5997,1.9659);
\draw[pathorange,very thick] (11.5997,1.9659) -- (12.0997,2.8319);
\draw[pathorange,very thick] (12.0997,2.8319) -- (11.8409,3.7978);
\draw[pathorange,very thick] (11.8409,3.7978) -- (12.7069,4.2978);
\draw[pathorange,very thick] (12.7069,4.2978) -- (12.9657,5.2637);
\draw[black!55,very thick,densely dashed] (12.9657,5.2637) -- (12.4657,6.1297);
\fill (8.9267,0) circle (2pt);
\node at (10.9462,-1.1) {\small $(3,8)$};
\node at (2.9896,2.2) {$\oplus$};
\node at (7.8767,2.2) {$=$};
\end{tikzpicture}

%% file: figs/fig-cut.tex
\begin{tikzpicture}[scale=0.72,line cap=round,line join=round]
\draw[pathblue,very thick] (0,0) -- (1,0);
\draw[pathblue,very thick] (1,0) -- (1.7071,0.7071);
\draw[pathblue,very thick] (1.7071,0.7071) -- (1.7071,1.7071);
\draw[pathorange,very thick] (1.7071,1.7071) -- (2.673,1.9659);
\draw[pathorange,very thick] (2.673,1.9659) -- (3.173,2.8319);
\draw[pathorange,very thick] (3.173,2.8319) -- (2.9142,3.7978);
\draw[pathorange,very thick] (2.9142,3.7978) -- (3.7802,4.2978);
\draw[pathorange,very thick] (3.7802,4.2978) -- (4.039,5.2637);
\draw[black!55,very thick,densely dashed] (4.039,5.2637) -- (3.539,6.1297);
\fill (0,0) circle (2pt);
\draw[red!70,densely dashed] (1.7071,1.7071) circle (0.42);
\node[red!70,right=5pt] at (1.7071,1.7071) {\scriptsize cut: $k_1 = m^{-1} \bmod k = 3$};
\end{tikzpicture}

%% file: figs/fig-sep.tex
\begin{tikzpicture}[scale=0.56,line cap=round,line join=round]
\draw[pathblue,very thick] (0,0) -- (1,0);
\draw[pathblue,very thick] (1,0) -- (1.7071,0.7071);
\draw[pathblue,very thick] (1.7071,0.7071) -- (1.7071,1.7071);
\draw[pathblue,very thick] (1.7071,1.7071) -- (2.673,1.9659);
\draw[pathblue,very thick] (2.673,1.9659) -- (3.173,2.8319);
\draw[pathblue,very thick] (3.173,2.8319) -- (2.9142,3.7978);
\draw[pathblue,very thick] (2.9142,3.7978) -- (3.7802,4.2978);
\draw[pathblue,very thick] (3.7802,4.2978) -- (4.039,5.2637);
\fill (0,0) circle (2pt);
\node at (2.0195,-1.1) {\small core};
\draw[pathblue,very thick] (5.939,0) -- (6.939,0);
\draw[pathblue,very thick] (6.939,0) -- (7.6461,0.7071);
\draw[pathblue,very thick] (7.6461,0.7071) -- (7.6461,1.7071);
\draw[pathblue,very thick] (7.6461,1.7071) -- (8.612,1.9659);
\draw[pathblue,very thick] (8.612,1.9659) -- (9.112,2.8319);
\draw[pathblue,very thick] (9.112,2.8319) -- (8.8532,3.7978);
\draw[pathblue,very thick] (8.8532,3.7978) -- (9.7192,4.2978);
\draw[pathblue,very thick] (9.7192,4.2978) -- (9.978,5.2637);
\draw[black!55,very thick,densely dashed] (9.978,5.2637) -- (9.478,6.1297);
\fill (5.939,0) circle (2pt);
\node at (7.9585,-1.1) {\small original};
\draw[pathblue,very thick] (11.878,0) -- (12.5851,0.7071);
\draw[pathblue,very thick] (12.5851,0.7071) -- (12.5851,1.7071);
\draw[pathblue,very thick] (12.5851,1.7071) -- (13.551,1.9659);
\draw[pathblue,very thick] (13.551,1.9659) -- (14.051,2.8319);
\draw[pathblue,very thick] (14.051,2.8319) -- (13.7922,3.7978);
\draw[pathblue,very thick] (13.7922,3.7978) -- (14.6582,4.2978);
\draw[pathblue,very thick] (14.6582,4.2978) -- (14.917,5.2637);
\draw[black!55,very thick,densely dashed] (14.917,5.2637) -- (14.417,6.1297);
\fill (11.878,0) circle (2pt);
\node at (13.3975,-1.1) {\small variant $\mathrm{m}_2$};
\draw[black!55,very thick,densely dashed] (16.817,0) -- (17.817,0);
\draw[black!55,very thick,densely dashed] (17.817,0) -- (17.317,0.866);
\draw[pathblue,very thick] (17.317,0.866) -- (18.0241,1.5731);
\draw[pathblue,very thick] (18.0241,1.5731) -- (18.0241,2.5731);
\draw[pathblue,very thick] (18.0241,2.5731) -- (18.99,2.8319);
\draw[pathblue,very thick] (18.99,2.8319) -- (19.49,3.6979);
\draw[pathblue,very thick] (19.49,3.6979) -- (19.2312,4.6638);
\draw[pathblue,very thick] (19.2312,4.6638) -- (20.0972,5.1638);
\draw[pathblue,very thick] (20.0972,5.1638) -- (20.356,6.1297);
\fill (16.817,0) circle (2pt);
\node at (18.5865,-1.1) {\small variant $\mathrm{q}_2$};
\end{tikzpicture}